\newtheorem{thm}{Theorem}[section]
\newtheorem{prp}[thm]{Proposition}
\newtheorem{lema}[thm]{Lemma}
\newtheorem{cor}[thm]{Corollary}
\newtheorem{rem}[thm]{Remark}
\newtheorem{ques}[thm]{Question}
\newtheorem{example}[thm]{Example}
\newtheorem{con}[thm]{Conjecture}
\newcommand{\G}{\Gamma}
\title{Graphs with two main and two plain eigenvalues}
\author[S. Hayat]{ Sakander Hayat$^\spadesuit$}
\thanks{$^\spadesuit$S.H is supported by a CAS-TWAS president's fellowship at USTC, China}
\author[M. Javaid]{ Muhammad Javaid$^\clubsuit$ }
\thanks{$^\clubsuit$M.J is supported by Chinese Academy of Science, President's International Fellowship Initiative (CAS-PIFI), Beijing, China. Grant No: 2015 PM 035}
\author[J. H. Koolen]{ Jack H. Koolen$^{\diamondsuit,\star}$}
\thanks{$^\diamondsuit$J.H.K. is partially supported by the National Natural Science Foundation of China (Nos. 11471009 and 11671376)}
\thanks{$^\star$Corresponding author}
\address{School of Mathematical Sciences,
University of Science and Technology of China,
Hefei, Anhui, 230026, P.R. China}
\email{sakander@mail.ustc.edu.cn}
\address{School of Mathematical Sciences,
University of Science and Technology of China,
Hefei, Anhui, 230026, P.R. China}
\email{javaidmath@gmail.com}
\address{Wen-Tsun Wu Key Laboratory of CAS, School of Mathematical Sciences,
University of Science and Technology of China,
Hefei, Anhui, 230026, P.R. China}
\email{koolen@ustc.edu.cn}
\begin{document}

\subjclass[2010]{05C50, 05E30}

\keywords{Main eigenvalue, Plain eigenvalue, Few eigenvalues, Refined spectrum, Strong graphs, Strongly biregular graphs}

\maketitle
\begin{center}
   \emph{In honor of Drago\v{s} Cvetkovi\'{c} on the occasion of his 75th birthday.}
\end{center}

\begin{abstract}
In this paper, we introduce the concepts of the plain eigenvalue, the main-plain index and the refined spectrum of graphs.
We focus on the graphs with two main and two plain eigenvalues and give some characterizations of them.
\end{abstract}
\section{Introduction}
For undefined notations, see next section.\\

In 1970, Doob \cite{D70} asked to study graphs with a few eigenvalues and he proposed at most five.
A connected regular graph with at most three distinct eigenvalues is known
to be strongly regular, and see, for example \cite{BV84} for a survey on strongly regular graphs.
The connected non-regular graphs with three distinct eigenvalues have been studied by, for example, De Caen, Van Dam \& Spence \cite{DVS99},
Bridges \& Mena \cite{BM81}, Muzychuk \& Klin \cite{MK98}, Van Dam \cite{V98} and Cheng, Gavrilyuk, Greaves \& Koolen \cite{CGGK15}.
The connected regular graphs with four distinct eigenvalues were studied by Van Dam \cite{V95}, Van Dam \& Spence \cite{VS98}
and Huang \& Huang \cite{HH17}, among others.
Cioab\u{a}, Haemers, Vermette \& Wong \cite{CHVW15} (resp. Cioab\u{a}, Haemers \& Vermette \cite{CHV16})
studied connected graphs with at most two eigenvalues not equal to $1$ and $-1$ (resp. $0$ and $-2$).
In this note, we look at a certain class of non-regular graphs with at most four distinct eigenvalues.\\

To be more precise, we first define the main and plain eigenvalues of a real symmetric matrix $T$.
An eigenvalue $\sigma$ of $T$ is said to be a \emph{main eigenvalue} (resp. \emph{plain eigenvalue})
if there exists an eigenvector $\mathbf{x}$ corresponding to $\sigma$,
such that $\mathbf{x}^{\top}\mathbf{j}\neq0$ (resp. $\mathbf{x}^{\top}\mathbf{j}=0$).
In this note, we will look at the class of graphs with two main and two plain eigenvalues
and show that there are many examples.
Note that the number of distinct valencies, for a connected graph with two main and two plain eigenvalues, is unbounded.\\

The following result is an immediate consequence of the Perron-Frobenius theorem \cite[Theorem 2.2.1]{BH12}.
\begin{lema}\cite{R07}\label{lem-sp-rad}
The spectral radius of a nonnegative irreducible real symmetric matrix is always simple and main.
\end{lema}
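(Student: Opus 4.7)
The plan is simply to invoke the Perron--Frobenius theorem in its strong form for nonnegative irreducible matrices, which already delivers both conclusions almost verbatim. So the proof will be two short paragraphs with essentially no computation.

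First, I would recall the version of Perron--Frobenius cited as Theorem 2.2.1 in \cite{BH12}: if $T$ is a nonnegative irreducible (real) matrix, then the spectral radius $\rho(T)$ is a positive eigenvalue of $T$, it is a \emph{simple} eigenvalue (both algebraically and geometrically), and there exists a strictly positive eigenvector $\mathbf{x}$ with $T\mathbf{x}=\rho(T)\mathbf{x}$. The first of these facts is exactly the ``simple'' half of the conclusion, so nothing more is required on that side.

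Second, for the ``main'' half, I would use the positive eigenvector $\mathbf{x}$ guaranteed above. Since every coordinate of $\mathbf{x}$ is strictly positive,
\[
\mathbf{x}^{\top}\mathbf{j}=\sum_{i} x_i>0,
\]
so $\mathbf{x}^{\top}\mathbf{j}\neq 0$, and by the definition of a main eigenvalue given just before the lemma statement, $\rho(T)$ is main. The symmetry hypothesis on $T$ is not actually needed for either claim, but it is harmless and fits the context of the paper.

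There is no real obstacle: the content of the lemma is exactly the content of Perron--Frobenius, once one notices that strict positivity of the Perron eigenvector is precisely what is required for the inner product with $\mathbf{j}$ to be nonzero. The only minor care point is citing the \emph{strong} form of Perron--Frobenius (the one that gives a strictly positive, not merely nonnegative, eigenvector), which is available because $T$ is assumed irreducible.
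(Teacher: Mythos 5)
Your proof is correct and matches the paper's approach exactly: the paper states this lemma as an immediate consequence of the Perron--Frobenius theorem, and your argument (simplicity from Perron--Frobenius, mainness from the strict positivity of the Perron eigenvector giving $\mathbf{x}^{\top}\mathbf{j}>0$) is precisely that deduction.
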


In 1999, De Caen \cite{DVS99} posed the following question:
Does a connected graph with three distinct eigenvalues has at most three
distinct valencies?
In a similar fashion to de Caen's question, we formulate the following conjecture.
\begin{con}\label{conj}
Let $\Gamma$ be a connected $t$-valenced graph with two main and two plain eigenvalues.
There exists a positive integer $C$ such that if $t\geq C$, then $\Gamma$ is a strong graph.
\end{con}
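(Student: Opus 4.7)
The plan is to analyze the polynomial $P(x) := (x - \pi_1)(x - \pi_2)$, where $\pi_1, \pi_2$ are the two plain eigenvalues. Since $P(A)$ annihilates every plain eigenvector, its image lies in the sum of the main eigenspaces. In the generic configuration, where the two main and two plain eigenvalues are all distinct, each main eigenspace is one-dimensional; otherwise it would contain a nonzero vector orthogonal to $\mathbf{j}$, forcing the corresponding eigenvalue to also be plain. Because $A\mathbf{j} = \mathbf{d}$ (the valency vector) lies in the main span and, by non-regularity, is not parallel to $\mathbf{j}$, the main span equals $\operatorname{span}\{\mathbf{j},\mathbf{d}\}$. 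The symmetry of $P(A)$ then yields a unique decomposition
\[ P(A) = \alpha J + \beta(\mathbf{j}\mathbf{d}^{\top} + \mathbf{d}\mathbf{j}^{\top}) + \delta\, \mathbf{d}\mathbf{d}^{\top} \]
for scalars $\alpha, \beta, \delta$. Comparing diagonal entries and using $(A^2)_{uu} = d_u$ gives
\[ \delta\, d_u^{2} + (2\beta - 1)\, d_u + (\alpha - \pi_1 \pi_2) = 0 \qquad \text{for every vertex } u. \]

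If the graph has at least three distinct valencies, this quadratic in $d_u$ has three or more roots, so all its coefficients must vanish: $\delta = 0$, $\beta = \tfrac{1}{2}$ and $\alpha = \pi_1 \pi_2$. Substituting back yields
\[ A^{2} = (\pi_1 + \pi_2)\,A + \pi_1\pi_2\,(J - I) + \tfrac{1}{2}(\mathbf{j}\mathbf{d}^{\top} + \mathbf{d}\mathbf{j}^{\top}), \]
which is precisely the defining identity of a strong graph (equivalently, $|N(u)\cap N(v)|$ is then an affine function of $d_u + d_v$, and the Seidel matrix has at most two distinct eigenvalues). Thus $C = 3$ should suffice in the generic configuration. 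To complete the argument I would rule out any overlap between the main and plain sets: if for instance $\mu_1 = \pi_1$, then $P(A)$ has rank one with image in the one-dimensional space $E_{\mu_2}$ and takes the form $c(\mathbf{d} - \mu_1\mathbf{j})(\mathbf{d} - \mu_1\mathbf{j})^{\top}$ with $c \neq 0$, so the corresponding diagonal identity becomes a nondegenerate quadratic in $d_u$, forcing $t \leq 2$. Hence overlap cannot occur under $t \geq 3$ and the generic analysis applies.

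The main obstacle I anticipate is matching the matrix identity derived above against the precise definition of \emph{strong graph} adopted in the paper: this notion admits several equivalent formulations (via the two-element Seidel spectrum, via the affine dependence of $|N(u)\cap N(v)|$ on $d_u + d_v$, and via the matrix identity above), and the equivalence must be invoked cleanly. A secondary subtlety is the uniqueness of the decomposition of $P(A)$ in $\operatorname{span}\{J,\;\mathbf{j}\mathbf{d}^{\top}+\mathbf{d}\mathbf{j}^{\top},\;\mathbf{d}\mathbf{d}^{\top}\}$, which rests on the linear independence of $\mathbf{j}$ and $\mathbf{d}$ and is automatic as soon as $t \geq 2$. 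If this outline is correct, the conjecture holds in the sharp form $C = 3$; the fact that the authors present it as a conjecture suggests the definition of strong graph in use may impose additional constraints, or that the handling of eigenvalue multiplicities requires more care than this sketch reveals.
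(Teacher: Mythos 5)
This statement is posed in the paper as an open \emph{conjecture}: the authors give no proof and only remark that they wonder whether $C=4$ suffices, so there is no in-paper argument to compare yours against. Your proposal therefore has to stand on its own, and after checking each step I cannot find a gap: it appears to prove the conjecture outright with $C=3$. The case split is exhaustive. If the four eigenvalues are pairwise distinct, then each main eigenvalue is simple (a repeated main eigenvalue would also be plain), so the main span is $\operatorname{span}\{\mathbf{j},\mathbf{d}\}$ and your decomposition of $P(A)$ exists (uniqueness is not actually needed); the diagonal comparison is correct since $(A^{2})_{uu}=d_{u}$, and three distinct valencies force $\delta=0$, $\beta=\tfrac12$, $\alpha=\pi_1\pi_2$. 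If one main eigenvalue coincides with a plain one, $\Gamma$ has three distinct eigenvalues of which two are main, and your rank-one computation (or, equivalently, the paper's own Lemma \ref{rowlinson}) gives that $\Gamma$ is strongly biregular, hence $t=2$; both main eigenvalues cannot coincide with plain ones, since $\Gamma$ would then have two distinct eigenvalues, be complete, and hence be regular.

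The step you flagged as the main obstacle --- that your matrix identity really yields strongness in the sense $S^{2}\in\langle S,I,J\rangle$ used in Section \ref{sec51} --- does go through, and it is worth recording why. With $S=J-I-2A$ and $JA=\mathbf{j}\mathbf{d}^{\top}$, $AJ=\mathbf{d}\mathbf{j}^{\top}$, one has $S^{2}=nJ+I-2J+4A-2(\mathbf{j}\mathbf{d}^{\top}+\mathbf{d}\mathbf{j}^{\top})+4A^{2}$; substituting your expression for $A^{2}$ contributes $+2(\mathbf{j}\mathbf{d}^{\top}+\mathbf{d}\mathbf{j}^{\top})$, so the terms outside $\langle J,I,A\rangle$ cancel precisely because $\beta=\tfrac12$, leaving $S^{2}=(n-2+4\pi_1\pi_2)J+(1-4\pi_1\pi_2)I+4(1+\pi_1+\pi_2)A\in\langle S,I,J\rangle$. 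As a sanity check, $\alpha=\pi_1\pi_2$ is consistent with the identity $\rho_1\rho_2=1-n$ that any two-eigenvalue Seidel matrix must satisfy, and for $t=2$ your quadratic can have both valencies as roots with $\delta\neq 0$, which is exactly why the paper's biregular examples (vertex-deleted strongly regular graphs, the switched $L(K_{6,6})$) escape the conclusion. Given that the authors present this as open, you should write the argument out in full and in particular make the equivalence of the matrix identity with the Seidel condition explicit as above; but as it stands I see no error, and your bound $C=3$ is sharper than the $C=4$ the authors ask about.
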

\noindent We wonder whether the Conjecture \ref{conj} is true for $C=4$.

\section{Preliminaries}\label{sec2}
Let $\G$ be a simple graph with vertex set $V(\G)$ and adjacency matrix $A$.
The \emph{complement} of $\Gamma$, is the graph $\overline{\Gamma}$, with the same vertex set
such that two distinct vertices are adjacent in $\overline{\Gamma}$ if and only if they
are not adjacent in $\Gamma$.
By the \emph{eigenvalues}, \emph{eigenvectors} and \emph{spectrum}
of $\Gamma$, we mean the eigenvalues, eigenvectors and spectrum of its adjacency matrix $A$.
We say that a graph $\Omega$ is \emph{cospectral} with $\G$ if their
spectra are same.
We will denote the matrix of all-ones, the identity matrix and the vector of all-ones
by $J$, $I$ and $\textbf{j}$ respectively.\\

We call a regular graph \emph{strongly regular} if there are constants
$a$ and $c$ such that every pair of distinct vertices has $a$ or $c$ common neighbors if
they are adjacent or non-adjacent respectively.
A strongly regular graph $\G$ with at least two vertices is called \emph{primitive}
if both $\G$ and its complement $\overline{\Gamma}$ are connected, otherwise \emph{imprimitive}.
Note that the only imprimitive strongly regular graphs are the regular complete multipartite graphs.
We refer the Godsil \& Royle \cite[Chapter 10]{gr01} for a background on strongly regular graphs.\\

We assume that the graph $\G$ has $s$ distinct valencies $k_{1},\ldots,k_{s}$.
We write $V_i:=\{v\in V (\Gamma)\mid d_v=k_i\}$ and
$n_i:=\mid V_i\mid$ for $i\in\{1,\ldots,s\}$. Clearly the subsets $V_i$ partition the vertex set of
$\Gamma$ and this partition is called the \emph{valency partition} of $\Gamma$.
A graph $\Gamma$ is called \emph{$t$-valenced} if it has $t$ number of distinct valencies.
A $2$-valenced graph is also called a \emph{biregular} graph.
Let $\pi=\{\pi_1,\ldots,\pi_s\}$ be a partition of the vertex set of $\G$. For each vertex $x$ in $\pi_i$,
write $d_{x}^{(j)}$ for the number of neighbors of $x$ in $\pi_j$. Then we write $b_{ij}=\frac{1}{|\pi_i|}\sum_{x\in\pi_i}d_{x}^{(j)}$
for the average number of neighbors in $\pi_j$ of the vertices in $\pi_i$. The matrix $B_{\pi}:=(b_{ij})$ is called the \emph{quotient matrix}
of $\pi$ and $\pi$ is called \emph{equitable} if for all $i$ and $j$, we have $d_{x}^{(j)}=b_{ij}$ for each $x\in\pi_i$.
A graph $\G$ is called an \emph{equitable} graph if its valency partition
is equitable.
Following Muzychuk and Klin \cite{MK98}, we say that a  biregular graph is \emph{strongly biregular}
if it has exactly three distinct eigenvalues.
The \emph{multicone over a graph} $\G$ is the graph with vertex set $S\cup V(\G)$, where $S$ is the set of isolated vertices
with $|S|\geq1$, such that every vertex of $S$ is adjacent to all vertices of $\Gamma$.
We just call it a \emph{multicone} when $\G$ is not specified. When $|S|=1$, we call it a \emph{cone}.

\section{The main-plain index and the refined spectrum}\label{sec3}
In this section, we will introduce some new terminologies which are subsequently used in the next sections.\\

For an eigenvalue $\theta$ of a real symmetric matrix $T$ with multiplicity $m$,
the \emph{plain multiplicity} $p$ of $\theta$ is defined by
$\dim(\xi_{T}(\theta)\cap\mathbf{j}^{\top})$. The set $\{\mathbf{x}\mid \mathbf{x}\in(\xi_{T}(\theta)\cap\mathbf{j}^{\top})\}$
is called the \emph{plain eigenspace} of $\theta$ and $\mathbf{x}$ is said to be the \emph{plain eigenvector} of $T$ corresponding to $\theta$.
Note that $p\leq m\leq p+1$ and $\theta$ is main if and only if
$m\neq p$ and plain if and only if $p\neq0$.
We can define \emph{refined spectrum} of $T$ by $\big(r,s;\mu_1,\ldots,\mu_{r};[\pi_{1}]^{p_{1}},\ldots,[\pi_{s}]^{p_{s}}\big)$,
where $\mu_i$ for $i=1,\ldots,r$ are the main eigenvalues of $T$ and $\pi_i$ for $i=1,\ldots,s$ are the plain eigenvalues
of $T$ with corresponding plain multiplicities $p_i$.
Note that by the above discussion, it is easy to obtain the spectrum of $T$ from its refined spectrum.
The pair $(r,s)$ is said to be the \emph{main-plain index} of $T$.
We define by $\xi_{T}(\theta)$ the eigenspace of $\theta$ for $T$. When $T$ is assumed to be the adjacency matrix of a graph $\G$, we
usually just write $\xi(\theta)$, if no confusion can occur.\\

For the adjacency matrix of a graph $\G$, if $\G$ has refined spectrum $\big(r,s;\mu_1,\ldots,\mu_{r};$\\ $[\pi_{1}]^{p_{1}},\ldots,[\pi_{s}]^{p_{s}}\big)$,
then the complement $\overline{\G}$ of $\G$ has refined spectrum
$\big(r,s;\mu'_1,\ldots,\mu'_{r};$ \\ $[-\pi_{1}-1]^{p_{1}},\ldots,[-\pi_{s}-1]^{p_{s}}\big)$ for
some real numbers $\mu'_1,\ldots,\mu'_{r}$. In particular, if $\G$ has main-plain index $(r,s)$, then
$\overline{\G}$ has also main-plain index $(r,s)$.\\

Now we give an example of refined spectrum.
\begin{example}\label{example}
Let $\G=K_{l_1}\cup K_{l_2}\cup\ldots\cup K_{l_t}$ with $l_1\geq l_2\geq\ldots,\geq l_t\geq1$.
We write the multiset of the $l_{i}$'s by $\{[m_{1}]^{b_{1}},[m_{2}]^{b_{2}},\ldots,[m_{d}]^{b_{d}}\}$
where $\{l_1,l_2,\ldots,l_t\}=\{m_1,m_2,\ldots,m_d\}$, $b_{i}=|\{j\mid l_{j}=m_{i}\}|$ for $i=1,2,\ldots,d$
and $m_{1}>m_2>\ldots>m_d$.
Let $U=\{i\mid b_{i}\geq2\}=\{u_{1},u_{2},\ldots,u_{k}\}$. Then the refined spectrum of $\G$ is
$(d,k+1;m_{1}-1,\ldots,m_{d}-1;
[-1]^{(l_1+\ldots+l_t)-t},[m_{u_1}-1]^{b_{1}-1},\ldots,[m_{u_k}-1]^{b_{k}-1})$.
\end{example}

Without proof we give the following trivial bound on the main-plain index of a graph.
\begin{lema}\label{tmain_tplain}
Let $\Gamma$ be a connected graph with distinct eigenvalues $\theta_{1},\theta_2,\ldots,\theta_d$ and
respective multiplicities $m_{1},m_{2},\ldots,m_{d}$.
Let $(r,s)$ be the main-plain index of $\G$ and $l:=|\{i\mid m_{i}\geq 2\}|$. Then $r\leq d$, $s\geq l$, and $d\leq r+s\leq l+d$
all hold.
\end{lema}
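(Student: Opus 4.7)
The plan is to reduce everything to a per-eigenvalue analysis and then combine. The single linear functional $\mathbf{x}\mapsto \mathbf{x}^{\top}\mathbf{j}$ restricted to the eigenspace $\xi(\theta_i)$ has kernel $\xi(\theta_i)\cap\mathbf{j}^{\perp}$, which is either all of $\xi(\theta_i)$ or a hyperplane in it. Therefore the plain multiplicity satisfies $p_i\in\{m_i-1,\,m_i\}$ for every $i$ (this is the inequality $p\le m\le p+1$ already recorded in the paper). Using the definitions, this gives a clean trichotomy for each distinct eigenvalue $\theta_i$: either (a) $\theta_i$ is main only, in which case $p_i=0$ and $m_i=1$; or (b) $\theta_i$ is plain only, with $p_i=m_i\ge 1$; or (c) $\theta_i$ is both main and plain, which forces $p_i=m_i-1\ge 1$ and hence $m_i\ge 2$. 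The ``neither'' case is excluded because it would force $m_i=0$.

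From this classification the three inequalities fall out directly. The bound $r\le d$ is trivial since each main eigenvalue is a distinct eigenvalue. The bound $s\ge l$ follows from case (c): every $\theta_i$ with $m_i\ge 2$ must satisfy $p_i\ge m_i-1\ge 1$, hence is plain; so the $l$ eigenvalues with $m_i\ge 2$ are all among the $s$ plain eigenvalues.

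For the two-sided bound $d\le r+s\le l+d$, let $b$ denote the number of distinct eigenvalues that are simultaneously main and plain. Since every eigenvalue is main or plain (by the trichotomy), inclusion–exclusion gives
\[
 r+s = d + b.
\]
The lower bound $r+s\ge d$ then follows from $b\ge 0$, and the upper bound $r+s\le l+d$ follows from the observation in case (c) that any eigenvalue counted by $b$ has $m_i\ge 2$, whence $b\le l$.

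I do not anticipate a real obstacle here: the entire statement is an accounting identity driven by the codimension-one fact $p_i\in\{m_i-1,m_i\}$. The only point that requires a moment of care is verifying that no eigenvalue is \emph{neither} main nor plain, which is immediate from $p_i\ge m_i-1$ together with $m_i\ge 1$.
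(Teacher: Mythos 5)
Your proof is correct. Note that the paper itself gives no argument for this lemma (the authors explicitly state it ``without proof''), so there is nothing to compare against line by line; your per-eigenvalue trichotomy based on the codimension-one fact $p\leq m\leq p+1$, already recorded in Section 3 of the paper, together with the inclusion--exclusion count $r+s=d+b$, is exactly the kind of ``trivial'' accounting the authors evidently had in mind, and it establishes all three inequalities completely.
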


\begin{rem}
In Section \ref{sec51}, we will discuss a class of graphs with four distinct eigenvalues of which two are simple and main and
the remaining two eigenvalues are plain. This shows that the lower bound $r+s=d$ can be achieved.
\end{rem}
In Section \ref{sec54}, we will see that the upper bound can also be achieved.\\

Note that two cospectral graphs can have different number of main eigenvalues and thus different refined spectra.
For example, the cospectral pair $K_{1,4}\cup K_{1}$ and $C_{4}\cup 2K_{1}$ have a different number of main eigenvalues
as $K_{1,4}\cup K_{1}$ has refined spectrum
$(3,1;2,0,-2;[0]^{3})$ and $C_{4}\cup 2K_{1}$ has refined spectrum $(2,2;2,0;[0]^{3},[-2]^{1})$.\\

In view of the above discussion, we would like to ask the following question.
\begin{ques}
Suppose that the graphs $\G_{1}$ and $\G_{2}$ have the same refined spectra.
Are their complements cospectral?
\end{ques}
\noindent In general, the answer is probably no.\\

Now we turn our attention to the graphs with two main eigenvalues.
The following result was shown by Hagos.
\begin{prp}\emph{(\cite[Theorem 2.1]{H02})}\label{hagos-main-ev}
The number of main eigenvalues of $\Gamma$ is the largest integer $k$ such that
the vectors $\mathbf{j},A\mathbf{j},A^{2}\mathbf{j},\ldots,A^{k-1}\mathbf{j}$
are linearly independent.
\end{prp}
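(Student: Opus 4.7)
The plan is to work with the spectral decomposition of $\mathbf{j}$ with respect to the orthogonal eigenspace decomposition induced by $A$. Let $\theta_1,\ldots,\theta_d$ be the distinct eigenvalues of $A$ and write $\mathbf{j}=\sum_{i=1}^{d}\mathbf{j}_i$, where $\mathbf{j}_i$ is the orthogonal projection of $\mathbf{j}$ onto the eigenspace $\xi(\theta_i)$. Since $\xi(\theta_i)$ is spanned by eigenvectors, one sees that $\theta_i$ is main if and only if $\mathbf{j}_i\neq\mathbf{0}$: any eigenvector $\mathbf{x}\in\xi(\theta_i)$ with $\mathbf{x}^\top\mathbf{j}\neq 0$ forces $\mathbf{j}_i\neq\mathbf{0}$, and conversely, if $\mathbf{j}_i\neq\mathbf{0}$ then $\mathbf{j}_i$ itself is an eigenvector for $\theta_i$ with $\mathbf{j}_i^\top\mathbf{j}=\|\mathbf{j}_i\|^2\neq 0$.

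Denote the main eigenvalues by $\mu_1,\ldots,\mu_k$ and their corresponding nonzero projections by $\mathbf{y}_\ell:=\mathbf{j}_{\mu_\ell}$. The vectors $\mathbf{y}_1,\ldots,\mathbf{y}_k$ lie in pairwise orthogonal eigenspaces, so they are linearly independent. Now I would compute, using that $A$ acts as multiplication by $\mu_\ell$ on $\xi(\mu_\ell)$ and by $\theta_i$ on $\xi(\theta_i)$ with $\mathbf{j}_i=\mathbf{0}$ for non-main $\theta_i$,
\begin{equation*}
A^{j}\mathbf{j}=\sum_{\ell=1}^{k}\mu_\ell^{j}\,\mathbf{y}_\ell \qquad (j\geq 0).
\end{equation*}
Consequently the matrix expressing $\mathbf{j},A\mathbf{j},\ldots,A^{k-1}\mathbf{j}$ in terms of the linearly independent family $\mathbf{y}_1,\ldots,\mathbf{y}_k$ is the $k\times k$ Vandermonde matrix with nodes $\mu_1,\ldots,\mu_k$. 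Since the $\mu_\ell$ are distinct, this Vandermonde matrix is invertible, which shows that $\mathbf{j},A\mathbf{j},\ldots,A^{k-1}\mathbf{j}$ are linearly independent.

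Conversely, the same formula shows that every vector $A^{j}\mathbf{j}$ for $j\geq 0$ lies in the $k$-dimensional subspace $\operatorname{span}(\mathbf{y}_1,\ldots,\mathbf{y}_k)$. Hence $\mathbf{j},A\mathbf{j},\ldots,A^{k}\mathbf{j}$ are $k+1$ vectors in a $k$-dimensional space and must be linearly dependent. This proves that $k$ is exactly the largest integer with the stated independence property.

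The only real ingredient beyond linear algebra is the correspondence between main eigenvalues and nonzero projections $\mathbf{j}_i$; the rest is a clean Vandermonde argument. I expect no serious obstacle, though some care is needed to argue that the main eigenvectors contributing to $\mathbf{j}$ can be taken as the $\mathbf{y}_\ell$, so that the relevant linear combinations involve exactly $k$ nonzero vectors.
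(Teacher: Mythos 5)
Your argument is correct and complete: the identification of main eigenvalues with the nonzero orthogonal projections $\mathbf{j}_i$ of $\mathbf{j}$ onto the eigenspaces, followed by the Vandermonde computation, is exactly the standard proof of Hagos's theorem. The paper itself states this proposition only as a citation of \cite[Theorem 2.1]{H02} and gives no proof, so there is nothing to compare against beyond noting that your route is the one used in the cited source; no gaps.
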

As an immediate consequence we have the following well-known fact.
\begin{lema}\emph{\cite[Prop. 1.4]{R07}}\label{1-main}
A graph $\Gamma$ has one main eigenvalue if and only if $\Gamma$ is regular.
\end{lema}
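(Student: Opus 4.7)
The plan is to derive Lemma \ref{1-main} as a direct consequence of Proposition \ref{hagos-main-ev}, translating the condition ``one main eigenvalue'' into a statement about linear dependence of the iterated images of $\mathbf{j}$ under $A$, and then recognizing that condition as regularity.

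First I would handle the $(\Leftarrow)$ direction. Suppose $\Gamma$ is $k$-regular. Then $A\mathbf{j} = k\mathbf{j}$, so $\mathbf{j}$ and $A\mathbf{j}$ are linearly dependent. By Proposition \ref{hagos-main-ev}, the number of main eigenvalues is the largest $k$ for which $\mathbf{j}, A\mathbf{j}, \ldots, A^{k-1}\mathbf{j}$ are linearly independent; here this largest integer is $1$, so $\Gamma$ has exactly one main eigenvalue.

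For the $(\Rightarrow)$ direction, suppose $\Gamma$ has exactly one main eigenvalue. Applying Proposition \ref{hagos-main-ev} again, the set $\{\mathbf{j}, A\mathbf{j}\}$ must be linearly dependent. Since $\mathbf{j} \neq \mathbf{0}$, this forces $A\mathbf{j} = c\,\mathbf{j}$ for some scalar $c$. But the $i$-th entry of $A\mathbf{j}$ is the valency of vertex $i$, so every vertex has valency $c$, i.e., $\Gamma$ is $c$-regular.

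There is essentially no obstacle: the whole argument is a one-line reduction to Hagos's proposition, together with the observation that $A\mathbf{j}$ is the valency vector of $\Gamma$. The only thing worth noting is that one should be careful to invoke Lemma \ref{lem-sp-rad} (which guarantees the spectral radius is always main) to justify that the ``number of main eigenvalues'' is at least $1$, so the ``exactly one'' case in Hagos's characterization is the relevant one and the argument above applies.
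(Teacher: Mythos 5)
Your proposal is correct and matches the paper's approach exactly: the paper presents this lemma as an immediate consequence of Proposition \ref{hagos-main-ev}, which is precisely the reduction you carry out (noting that $A\mathbf{j}$ is the valency vector, so $\{\mathbf{j},A\mathbf{j}\}$ is dependent if and only if $\Gamma$ is regular). The only cosmetic issues are the reuse of the symbol $k$ for both the regularity degree and the index in Hagos's proposition, and the final appeal to Lemma \ref{lem-sp-rad}, which is unnecessary since $\mathbf{j}\neq\mathbf{0}$ already guarantees the count is at least $1$.
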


Now we assume that, the graph $\G$ has two main eigenvalues.
If we denote $\mathbf{d}:=A\mathbf{j}$, then by Prop. \ref{hagos-main-ev},
the graph $\Gamma$ satisfies the following relation:
\begin{equation}\label{hagos-eqn}
A\mathbf{d}=a\mathbf{d}+b\mathbf{j},
\end{equation}
where $a,~b$ are some real numbers.
Note that $\Gamma$ can not be a regular graph.\\

Hayat et al. \cite[Theorem 2.1]{HKLQ16} showed that an equitable biregular graph
has precisely two main eigenvalues.
They also showed that the number of distinct valencies of a connected
graph with two main eigenvalues is unbounded so they are far from equitable and
biregular. Now we show that a biregular graph with two main eigenvalues
is equitable.
\begin{prp}\label{bireg-equitable}
Let $\Gamma$ be a biregular graph. Then $\Gamma$ has two main eigenvalues if and only $\Gamma$ is equitable.
\end{prp}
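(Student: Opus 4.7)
The plan is to use Hagos's criterion (Proposition \ref{hagos-main-ev}) applied to a biregular graph. One direction, that an equitable biregular graph has exactly two main eigenvalues, is already stated in the excerpt as a theorem of Hayat et al., so I would simply cite it. The substantive content is the converse: assume $\Gamma$ is biregular with two main eigenvalues, and deduce that its valency partition is equitable.

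Let $V_1, V_2$ be the valency partition with valencies $k_1 > k_2$. Since $\Gamma$ has exactly two main eigenvalues, Proposition \ref{hagos-main-ev} guarantees that $\mathbf{j}$ and $A\mathbf{j}$ are linearly independent, while $\mathbf{j}, A\mathbf{j}, A^2\mathbf{j}$ are not, so equation \eqref{hagos-eqn} gives $A\mathbf{d} = a\mathbf{d} + b\mathbf{j}$ for some scalars $a,b$, where $\mathbf{d} = A\mathbf{j} = k_1 \mathbf{1}_{V_1} + k_2 \mathbf{1}_{V_2}$. The next step is to evaluate both sides entry-wise. For $x\in V_i$, writing $d_x^{(j)}$ for the number of neighbors of $x$ in $V_j$, we have
\[
(A\mathbf{d})_x \;=\; k_1 d_x^{(1)} + k_2 d_x^{(2)} \;=\; (k_1-k_2)\,d_x^{(i)} + k_2\,(d_x^{(1)}+d_x^{(2)}) \;=\; (k_1-k_2)\,d_x^{(i)} + k_2 k_i,
\]
while $(a\mathbf{d}+b\mathbf{j})_x = a k_i + b$, a constant depending only on $i$. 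Since $k_1 \neq k_2$, I can solve for $d_x^{(i)}$ and see that it is constant as $x$ ranges over $V_i$.

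Having shown that $d_x^{(1)}$ is constant on $V_1$ and $d_x^{(2)}$ is constant on $V_2$, the remaining within/between numbers follow for free: for $x\in V_1$ the valency condition $d_x^{(1)}+d_x^{(2)}=k_1$ forces $d_x^{(2)}$ to be constant on $V_1$ as well, and symmetrically on $V_2$. Therefore all four numbers $b_{ij}$ associated to the valency partition are well-defined pointwise, which is exactly the definition of an equitable partition. This completes the converse.

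I do not anticipate a real obstacle; the only slightly delicate point is making sure that the constancy of the single number $d_x^{(i)}$ inside $V_i$ is enough — but as noted, regularity within each part (each vertex in $V_i$ has fixed total degree $k_i$) immediately promotes this to constancy of both $d_x^{(1)}$ and $d_x^{(2)}$ on each $V_i$, which is what biregularity buys us over the general case.
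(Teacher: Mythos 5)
Your argument is correct and is essentially the paper's own proof: both cite Hayat et al.\ for the ``equitable $\Rightarrow$ two main eigenvalues'' direction and derive the converse by evaluating $A\mathbf{d}=a\mathbf{d}+b\mathbf{j}$ entrywise, using $d_x^{(1)}+d_x^{(2)}=k_i$ and $k_1\neq k_2$ to solve for the partition numbers. One small slip: in your displayed identity the middle expression should read $(k_1-k_2)\,d_x^{(1)}+k_2\,(d_x^{(1)}+d_x^{(2)})$ for every $x$ (not $d_x^{(i)}$, which is only correct when $i=1$); the conclusion is unaffected since constancy of $d_x^{(1)}$ on $V_i$ plus fixed total degree $k_i$ gives constancy of $d_x^{(2)}$ as well.
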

\begin{proof}
We only need to show the `if part' of the statement. Let $\Gamma$ be a biregular graph with
two main eigenvalues. Let $d_{1}$ and $d_{2}$ be the two distinct valencies of $\Gamma$.
Then, for some real numbers $a,~b$, the graph $\Gamma$ satisfies:
\begin{equation*}
A\mathbf{d}=a\mathbf{d}+b\mathbf{j}.
\end{equation*}
Now we have $\sum_{y:y\sim x} d_y = (A\mathbf{d})_x = ad_{x}+b$ for any vertex $x$ and $d_{y}\in\{d_{1},d_{2}\}$ for all $y$.
This means that $x$ has a constant number of neighbors with valency $d_1$ and the valency of $x$ only depends on
whether $d_x=d_1$ or $d_x=d_2$. This shows that the valency partition is equitable. This completes the proof.
\end{proof}

Note that the cone over a graph $\Omega$ has adjacency matrix
$A=\left(\begin{array}{cc}
0 & \mathbf{j}^{\top} \\
\mathbf{j} & A'\\
\end{array}
\right),$
where $A'$ is the adjacency matrix of $\Omega$. Now, for an eigenvalue $\theta$ of $\Omega$, if $\mathbf{x}$ is a plain
eigenvector of $\Omega$, then
$\left(\begin{array}{c}
0 \\
\mathbf{x}\\
\end{array}
\right)$
is a plain eigenvector of the cone over $\Omega$. Consequently we have the following result.
\begin{prp}
Let $\G$ be a graph with $r$ main eigenvalues. Then the cone over $\G$ has at most $r+1$ main eigenvalues.
\end{prp}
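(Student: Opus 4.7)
The plan is to exploit the plain-eigenvector lifting observation stated immediately before the proposition, together with a short counting identity relating the number of main eigenvalues of a graph to its total plain multiplicity.

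First I would record the following elementary consequence of the bound $p \le m \le p+1$ noted in Section~\ref{sec3}: for any graph on $n$ vertices with $r$ main eigenvalues, the total plain multiplicity $\sum_\theta p_\theta$ (summed over distinct eigenvalues $\theta$) equals $n-r$. Indeed, for each $\theta$ one has $m_\theta = p_\theta + 1$ precisely when $\theta$ is main, and $m_\theta = p_\theta$ otherwise, so summing gives $n = \sum_\theta m_\theta = \sum_\theta p_\theta + r$.

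Now set $n' := |V(\G)|$ and let $\widetilde{\G}$ denote the cone over $\G$, which has $n'+1$ vertices. Applying the counting identity to $\G$, the direct sum of plain eigenspaces of $\G$ has dimension $n'-r$; choose a basis $\mathbf{x}_1,\ldots,\mathbf{x}_{n'-r}$ with each $\mathbf{x}_i$ a plain eigenvector for some eigenvalue of $\G$. By the lifting observation stated just before the proposition, each $\binom{0}{\mathbf{x}_i}$ is a plain eigenvector of $\widetilde{\G}$ for the same eigenvalue, and these lifts are plainly linearly independent. Hence the total plain multiplicity of $\widetilde{\G}$ is at least $n'-r$. Applying the counting identity in reverse to $\widetilde{\G}$, its number of main eigenvalues is at most $(n'+1)-(n'-r) = r+1$, which is the claim.

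There is no real obstacle. The only minor care needed is to verify that the lifts $\binom{0}{\mathbf{x}_i}$ are honestly plain eigenvectors of $\widetilde{\G}$; this is exactly the content of the paragraph preceding the statement, since $\binom{0}{\mathbf{x}_i}$ is orthogonal to the all-ones vector $\binom{1}{\mathbf{j}}$ of $\widetilde{\G}$, and the cone's block structure reduces its eigen-equation on such a vector to $A'\mathbf{x}_i = \theta \mathbf{x}_i$ (the top entry vanishes because $\mathbf{j}^\top \mathbf{x}_i = 0$).
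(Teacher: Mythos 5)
Your proposal is correct and follows the same route the paper intends: the paper derives this proposition directly from the preceding observation that a plain eigenvector $\mathbf{x}$ of $\G$ lifts to the plain eigenvector $\bigl(\begin{smallmatrix}0\\ \mathbf{x}\end{smallmatrix}\bigr)$ of the cone, and your counting identity $\sum_{\theta} p_{\theta} = n - r$ is exactly the bookkeeping needed to turn that lifting into the bound $r+1$. You have merely made explicit the dimension count that the paper leaves implicit, so there is nothing to add.
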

Note that if we start with $\Omega=K_{1}$, then the resulting cone has exactly one main eigenvalue. We wonder whether
such an example also exists for larger $r$.
\section{Some characterizations}\label{sec4}
In this section, we present some characterizations of graphs with a small number of main and plain eigenvalues.\\

Note that the connected graphs with one main and one plain eigenvalue are the complete graphs with at least two vertices,
as there are at most two distinct eigenvalues and thus the diameter is exactly one.
As a connected regular graph with three distinct eigenvalues is strongly regular (see for example
\cite[Lemma 10.2.1]{gr01}), we obtain:
\begin{lema}\label{1main_2plain}
A connected graph $\Gamma$ has one main eigenvalue and at most two plain eigenvalues if and only if $\Gamma$ is a
strongly regular graph.
\end{lema}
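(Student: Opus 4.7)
The plan is to use the characterization of graphs with one main eigenvalue (Lemma \ref{1-main}) together with the known fact that a connected regular graph with at most three distinct eigenvalues is strongly regular (cited as \cite[Lemma 10.2.1]{gr01}), with the bridge between the two being the relation between the main/plain dichotomy and orthogonality to $\mathbf{j}$ for a regular graph.

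For the $(\Leftarrow)$ direction, assume $\Gamma$ is a connected strongly regular graph with valency $k$. Since $\Gamma$ is regular, Lemma \ref{1-main} gives that $\Gamma$ has exactly one main eigenvalue. Because $\Gamma$ is strongly regular it has at most three distinct eigenvalues, so at most two eigenvalues besides $k$. By Perron--Frobenius (or Lemma \ref{lem-sp-rad}) the eigenvalue $k$ is simple with eigenspace spanned by $\mathbf{j}$, so the eigenspaces of the remaining eigenvalues are contained in $\mathbf{j}^{\perp}$. Each of those remaining eigenvalues is therefore plain, and there are at most two of them.

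For the $(\Rightarrow)$ direction, assume $\Gamma$ is connected with one main and at most two plain eigenvalues. By Lemma \ref{1-main}, $\Gamma$ is regular with valency $k$, so $\mathbf{j}$ is an eigenvector for $k$ and $k$ is the unique main eigenvalue (it is simple with eigenspace $\langle\mathbf{j}\rangle$, hence not plain). For every other eigenvalue $\theta$, the eigenspace $\xi(\theta)$ is orthogonal to $\xi(k)=\langle\mathbf{j}\rangle$, so $\xi(\theta)\subseteq\mathbf{j}^{\perp}$, making $\theta$ plain. Therefore the number of distinct eigenvalues of $\Gamma$ equals $1$ plus the number of plain eigenvalues, which is at most $3$. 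A connected regular graph with at most three distinct eigenvalues is strongly regular by \cite[Lemma 10.2.1]{gr01} (with complete graphs and regular complete multipartite graphs covered as the imprimitive cases), finishing the proof.

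The argument is essentially an assembly of results already stated earlier, so there is no real obstacle; the only point requiring a tiny bit of care is observing that for a connected regular graph the valency is automatically the unique main eigenvalue and every other eigenvalue is automatically plain, which reduces the hypothesis ``one main and at most two plain'' to ``at most three distinct eigenvalues'' and lets the cited classification do the work.
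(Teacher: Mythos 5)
Your proof is correct and follows exactly the route the paper intends: the paper states this lemma as an immediate consequence of Lemma \ref{1-main} and the classification of connected regular graphs with at most three distinct eigenvalues, and your writeup simply supplies the (routine) details that for a connected regular graph the valency is the unique main eigenvalue and all other eigenvalues are plain. No issues.
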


Note that the connected graphs with one main eigenvalue and three plain eigenvalues are exactly the connected regular
graphs with four distinct eigenvalues. These graphs are studied by Van Dam \cite{V95}, Van Dam \& Spence \cite{VS98}
and Huang \& Huang \cite{HH17}, among others.\\

The following lemma shows that the connected graphs with two main eigenvalues and one plain eigenvalue are the
nonregular complete bipartite graphs. We use \cite[Prop. 2]{V98} to show it.
\begin{lema}\label{2main_1plain}
A connected graph $\Gamma$ has two main eigenvalues and one plain eigenvalue if and only if $\Gamma$
is a nonregular complete bipartite graph.
\end{lema}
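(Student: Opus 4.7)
The plan is to prove both directions of the equivalence. The forward direction (a nonregular $K_{m,n}$ has main-plain index $(2,1)$) is a direct spectral computation: the spectrum is $\sqrt{mn},\,0,\,-\sqrt{mn}$ with multiplicities $1,\,m+n-2,\,1$; the $\pm\sqrt{mn}$-eigenvectors are constant on each part with nonzero total sum (using $m\neq n$ to handle the $-\sqrt{mn}$-eigenvector), while every $0$-eigenvector vanishes in sum on each part separately, so it is plain.

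For the converse, assume $\Gamma$ is connected with main-plain index $(2,1)$; denote the main eigenvalues by $\mu_1,\mu_2$ and the plain eigenvalue by $\pi_1$. I would first pin down the multiplicities. Each distinct eigenvalue is main, plain, or both, and the inequality $p\leq m\leq p+1$ forces a main-only eigenvalue to be simple and a plain-only eigenvalue to satisfy $m=p$. A short count together with Lemma \ref{tmain_tplain} then shows that $\Gamma$ has exactly three distinct eigenvalues: two simple main ones and one plain one of multiplicity $n-2$ (the case $d\leq 2$ would force $\Gamma$ to be complete, hence regular, contradicting Lemma \ref{1-main}). Since $\pi_1$ is not main, $\mathbf{j}$ is orthogonal to the $\pi_1$-eigenspace, so it lies in the span of unit eigenvectors $\mathbf{x},\mathbf{y}$ for $\mu_1,\mu_2$; writing $\mathbf{j}=p\mathbf{x}+q\mathbf{y}$ gives the coordinate relation $px_i+qy_i=1$ for every vertex $i$.

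The spectral decomposition now reads $A-\pi_1 I=(\mu_1-\pi_1)\mathbf{x}\mathbf{x}^\top+(\mu_2-\pi_1)\mathbf{y}\mathbf{y}^\top$. Reading off the diagonal gives $(\mu_1-\pi_1)x_i^2+(\mu_2-\pi_1)y_i^2=-\pi_1$; eliminating $y_i$ via $px_i+qy_i=1$ yields a polynomial equation of degree at most two in $x_i$, so $(x_i,y_i)$ takes at most two values and induces a partition $V(\Gamma)=V_1\cup V_2$ (the one-class alternative makes $\mathbf{x}$ a multiple of $\mathbf{j}$, whence $\Gamma$ is regular, a contradiction). The off-diagonal identity $A_{ij}=(\mu_1-\pi_1)x_ix_j+(\mu_2-\pi_1)y_iy_j$ for $i\neq j$ is therefore constant on each block $V_1\times V_1,\ V_2\times V_2,\ V_1\times V_2$; within a class the value equals $-\pi_1$ by comparison with the diagonal, forcing $-\pi_1\in\{0,1\}$. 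The main obstacle is cleanly ruling out $\pi_1=-1$: this would make each $V_k$ a clique and the between-class edges all present or all absent, producing either the complete graph (regular) or a disjoint union of two cliques (disconnected), both contradictions. Hence $\pi_1=0$, $\Gamma$ has no within-class edges, and connectedness forces all between-class edges, so $\Gamma\cong K_{|V_1|,|V_2|}$ with $|V_1|\neq|V_2|$ by nonregularity.
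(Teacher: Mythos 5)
Your proposal is correct, but it takes a genuinely different route from the paper. The paper's proof is short and leans on external results: after noting that $\Gamma$ must have exactly three distinct eigenvalues and that both main eigenvalues are simple (the same multiplicity count you perform), it invokes Van Dam's \cite[Prop.~2]{V98} together with Lemma \ref{1-main} to conclude that $\Gamma$ is a nonregular complete bipartite graph, and dismisses the converse as straightforward via Example \ref{example}. You instead re-derive the needed part of Van Dam's result from scratch: you write out the spectral decomposition $A-\pi_1I=(\mu_1-\pi_1)\mathbf{x}\mathbf{x}^{\top}+(\mu_2-\pi_1)\mathbf{y}\mathbf{y}^{\top}$, use $\mathbf{j}\in\langle\mathbf{x},\mathbf{y}\rangle$ to cut the diagonal constraint down to a quadratic in $x_i$, and read the graph structure off the resulting two-class partition. (Note this is closer in spirit to the paper's own proof of Lemma \ref{rowlinson}, which works with the rank-one identity $(A-\theta_1I)(A-\theta_2I)=\alpha\alpha^{\top}$ rather than the full spectral decomposition.) Your approach buys self-containedness and makes explicit why the plain eigenvalue must be $0$ rather than $-1$; the paper's buys brevity at the cost of a black-box citation. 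One point you should spell out: "a polynomial equation of degree at most two has at most two roots" requires that the polynomial not vanish identically; here the coefficient of the linear term is $-2p(\mu_2-\pi_1)/q^2\neq0$ since $p=\mathbf{x}^{\top}\mathbf{j}\neq0$ and $\mu_2\neq\pi_1$, so this is automatic, but it deserves a line. You should also note that at least one class has two or more vertices (which holds since $n\geq3$) before reading off $-\pi_1\in\{0,1\}$ from within-class pairs.
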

\begin{proof}
Note that $\Gamma$ has three distinct eigenvalues as $\Gamma$ is not complete.
Let $\theta_{0}>\theta_{1}>\theta_{2}$ be the
distinct eigenvalues of $\Gamma$. By Lemma \ref{lem-sp-rad}, $\theta_0$ is simple and main. Then the other
main eigenvalue must be simple, as otherwise we would have two plain eigenvalues.
By \cite[Prop. 2]{V98} and Lemma \ref{1-main},
$\Gamma$ is nonregular and complete bipartite. The converse is straightforward, see also, Example \ref{example}.
\end{proof}

The following result characterizes the disconnected graphs with two main
and two plain eigenvalues.
\begin{prp}\label{disconnected}
Let $\G$ be a disconnected graph with two main and two plain eigenvalues.
Then $\G$ is a member of one of the following families.
\begin{enumerate}
  \item[\emph{(i)}] The disjoint union of cliques $K_{l_1}\cup K_{l_2}\cup\ldots\cup K_{l_t}$,
  where $t\geq3$, $l_{1}\neq l_{2}$ and $l_2=l_3=\ldots=l_t$.
  \item[\emph{(ii)}] The disjoint union of isolated vertices and a regular complete multipartite graph.
  \item[\emph{(iii)}] The disjoint union of an isolated vertex and a strongly regular graph.
  \item[\emph{(iv)}] The disjoint union of two strongly regular graphs with different valencies and the same
  non-trivial eigenvalues.
\end{enumerate}
\end{prp}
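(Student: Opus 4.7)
The plan is to decompose $\Gamma = \Gamma_1 \cup \cdots \cup \Gamma_k$ ($k \geq 2$) into connected components with spectral radii $\rho_i$. By Lemma \ref{lem-sp-rad} every $\rho_i$ is a main eigenvalue of $\Gamma$, so the set $\{\rho_1,\ldots,\rho_k\}$ contains at most two distinct values. The proof proceeds in three stages: first exclude the case where all $\rho_i$ coincide; then show that when there are two distinct spectral radii every component must be regular; and finally enumerate the regular configurations, matching each to one of the four stated families.

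In the single-value case, all $\rho_i = \rho$ and $\rho$ has multiplicity at least $k \geq 2$ in $\Gamma$, so $\rho$ is plain with plain multiplicity $k - 1$, accounting for one of the two plain eigenvalues. By Lemma \ref{1-main} some non-regular component $\Gamma_j$ must supply the second main eigenvalue $\theta_2$. Applying Hagos' identity \eqref{hagos-eqn} to $\Gamma_j$ together with the bound $d \leq r + s = 4$ from Lemma \ref{tmain_tplain} shows that $\Gamma$ has at most three distinct eigenvalues; a short subcase analysis---using Lemma \ref{2main_1plain} to identify any non-regular component with two main and one plain eigenvalues as a $K_{a,b}$, and tracking the contributions of each component to the plain multiplicities---produces a contradiction with the requirement of exactly two plain eigenvalues. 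Hence there are two distinct spectral radii $\rho_1 > \rho_2 \geq 0$, and these are precisely the two main eigenvalues of $\Gamma$.

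Next I would show that every component is regular. A non-regular component $\Gamma_j$ must satisfy $\rho(\Gamma_j) = \rho_1$ (since its second main eigenvalue cannot exceed its own spectral radius), and its second main eigenvalue must then coincide with $\rho_2$. Since $\rho_2$ is the valency of some $\rho_2$-regular component of $\Gamma$, it is a nonnegative integer. By Lemma \ref{2main_1plain}, if $\Gamma_j$ had only one plain eigenvalue it would be $K_{a,b}$, forcing $\rho_2 = -\sqrt{ab} < 0$---a contradiction. Thus $\Gamma_j$ has at least two plain eigenvalues, and these must equal the two global plain eigenvalues of $\Gamma$. The global plain eigenvalues in turn absorb the non-Perron eigenvalues of the $\rho_2$-regular components; combining this with the classical theorem that a connected graph with smallest eigenvalue at least $-1$ is a complete graph then forces $\Gamma_j$ to be $K_n$, contradicting non-regularity. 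This is the most delicate step of the argument and requires careful bookkeeping depending on whether $\rho_2 = 0$ or $\rho_2 > 0$.

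With every component regular, let $B$ and $C$ denote the sets of $\rho_1$-regular and $\rho_2$-regular components respectively. The two global plain eigenvalues must contain every non-Perron eigenvalue appearing in any component, and $\rho_i$ becomes plain precisely when the corresponding side has cardinality at least $2$. A case split on $(|B|, |C|)$, using that a connected regular graph with at most three distinct eigenvalues is $K_n$ or strongly regular, yields: the case $|B|, |C| \geq 2$ is excluded since every $\rho_1$-regular component would need non-Perron spectrum in $\{\rho_2\}$ with $\rho_2 \geq 0$, but $K_{\rho_1+1}$ has non-Perron value $-1$; the case $|B| = 1$, $|C| \geq 2$ (and symmetrically) forces every $C$-component to be $K_{\rho_2+1}$ and the unique $B$-component to be either $K_{\rho_1+1}$ (family (i)) or a regular complete multipartite $K_{n \times m}$ with $\rho_2 = 0$ (family (ii)); and the case $|B| = |C| = 1$ reduces, via the same non-Perron constraints, either to two strongly regular graphs with identical non-trivial eigenvalues (family (iv)) or to an isolated vertex paired with a strongly regular graph (family (iii)).
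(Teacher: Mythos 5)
Your overall organization is genuinely different from the paper's: you classify the components by their spectral radii (at most two distinct values, since each is main by Lemma \ref{lem-sp-rad}), whereas the paper isolates one non-complete component $\G_1$ and compares it with the induced subgraph $\G_2$ on the remaining vertices, disposing of the all-cliques case first via Example \ref{example}. Your skeleton is viable: the elimination of the equal-spectral-radii case is essentially sound (Lemma \ref{2main_1plain} forces the non-regular component to be a $K_{a,b}$, after which any further component either duplicates the eigenvalue $-\sqrt{ab}$, creating a third plain eigenvalue, or would have to have all eigenvalues in $\{\sqrt{ab},0\}$, which is impossible by the trace), and your final enumeration of regular configurations does land on the four stated families.

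The genuine gap is the step you yourself flag as the most delicate: showing no component is non-regular when $\rho_1>\rho_2\geq 0$. You assert that the plain eigenvalues ``absorb'' the non-Perron eigenvalues of the $\rho_2$-regular components and that this combines with the fact that $\lambda_{\min}\geq -1$ forces completeness, but you never exhibit the mechanism, and as written the step does not verify. The missing observation is this: if $\Gamma_j$ is non-regular, its second main eigenvalue equals $\rho_2$, so $\rho_2$ is simultaneously an eigenvalue of $\Gamma_j$ and the spectral radius of some other component; hence $\rho_2$ has multiplicity at least $2$ in $\Gamma$ and is therefore plain as well as main. When $\rho_2>0$, combining this with $\lambda_{\min}(\Gamma_j)$ (plain, since it is negative and hence neither $\rho_1$ nor $\rho_2$) and with the negative plain eigenvalue contributed by a $\rho_2$-regular component on at least two vertices either yields three plain eigenvalues or forces all of these to collapse to $-1$, making $\Gamma_j$ complete and contradicting non-regularity. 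When $\rho_2=0$, the fact that $0$ is both main and plain caps the number of distinct eigenvalues of $\Gamma$ at three, so $\Gamma_j$ has exactly three distinct eigenvalues with main eigenvalues $\rho_1$ and $0$; then Proposition \ref{hagos-main-ev} gives $A\mathbf{d}=\rho_1\mathbf{d}$, so $\mathbf{d}$ is a Perron eigenvector, and Eq. (\ref{vandam-eqn}) forces $\mathbf{d}$ to be constant, again contradicting non-regularity. Without an argument of this kind the regularity of all components --- the hinge on which your entire final case analysis rests --- is unproved.
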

\begin{proof}
The case when each connected component is a clique follows immediately from Example \ref{example}.
And this shows (i).\\

Next we may assume that there exists a connected component, say, $\G_1$ of $\G$ with at least three eigenvalues.
Let $\G_1$ have vertex set $V_1$ with $n_1$ vertices and let $\G_2$ be the induced subgraph on $V_2=V-V_1$ with
$n_{2}=n-n_{1}$ vertices.
Let $\rho_1$ and $\rho_2$ be the spectral radii of $\G_1$ and $\G_2$, respectively.
Note that $\rho_1>0$, as $\G_1$ is not complete.
Since $\G$ has at most four distinct eigenvalues and if $\G$ has exactly four distinct eigenvalues, then
the main eigenvalues are simple, which implies that $\rho_1$ and $\rho_2$ are the main eigenvalues. This implies that
$\G_1$ has exactly three distinct eigenvalues $\rho_1>\theta_1>\theta_2$ where $\theta_1\geq0$ and $\theta_2<-1$ both hold.
This means that $\G_1$ and $\G_2$ are both regular and that $\G_1$ is strongly regular.
If $\rho_2>0$ then $\G_2$ has at least one other eigenvalue $\theta_3\leq -1$ and as $|\{\rho_1,\rho_2,\theta_1,\theta_2,\theta_3\}|\leq4$
we see that either $\rho_1=\rho_2$ or $\theta_3=\theta_2$. If $\rho_1=\rho_2$, then $|\{\rho_1,\rho_2,\theta_1,\theta_2,\theta_3\}|=3$
as then $\rho_1$ is main and plain. But this means that $\theta_1$ is a simple eigenvalue of $\G$, which implies that $\G_2$
has exactly two eigenvalues of which one is less than $-1$, a contradiction.
So we may assume $\rho_2>0$, $\rho_{1}\neq\rho_{2}$ and $\theta_3=\theta_2$. Now $\rho_1$ and $\rho_2$ are the main eigenvalues and $\theta_1$
and $\theta_2$ the plain eigenvalues. As $\rho_2>0$ and $\theta_2<-1$, we see that $\G_2$ is not complete and hence has three distinct eigenvalues.
This implies that $\rho_2$ is simple and $\G_2$ has distinct eigenvalue $\rho_2>\theta_1>\theta_2$. So we are in case (iv) of the proposition.\\

The last case we need to consider is $\rho_2=0$. Then we have either $\theta_1>0$ and $\G_2$ is an isolated vertex or $\theta_1=0$ and $\G_1$ is
regular and complete multipartite. This shows the cases (ii) and (iii).

\end{proof}

As a corollary we have:
\begin{cor}
Let $\G$ be a connected graph with two main and two plain eigenvalues. Assume that the complement of $\G$ is disconnected. Then
$\G$ is a member of one of the following families.
\begin{enumerate}
  \item[\emph{(i)}] The complete $t$-partite graph $K_{l_1,l_2,\ldots,l_t}$ where $t\geq3$, $l_{1}\neq l_{2}$ and $l_2=l_3=\ldots=l_t$.
  \item[\emph{(ii)}] The multicone over a regular complete multipartite graph.
  \item[\emph{(iii)}] The cone over a strongly regular graph.
  \item[\emph{(iv)}] The join of two strongly regular graphs with different valencies and the same
  non-trivial eigenvalues.
\end{enumerate}

\end{cor}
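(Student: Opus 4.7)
My plan is to apply Proposition~\ref{disconnected} to $\overline{\G}$ and then translate each of its four conclusions by complementation. The hypothesis check is immediate: $\overline{\G}$ is disconnected by assumption, and by the observation in Section~\ref{sec3} that complementation preserves the main-plain index (each plain eigenvalue $\pi$ of $\G$ corresponds to a plain eigenvalue $-1-\pi$ of $\overline{\G}$), the graph $\overline{\G}$ also has two main and two plain eigenvalues. Hence $\overline{\G}$ falls into one of the four families listed in Proposition~\ref{disconnected}.

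I then use the identity $\overline{A \sqcup B} = \overline{A} \vee \overline{B}$ to compute $\G$ in each case. Family~(i) of Proposition~\ref{disconnected}, $\overline{\G} = K_{l_1} \sqcup \cdots \sqcup K_{l_t}$, complements to the complete multipartite graph $K_{l_1,\ldots,l_t}$, giving (i). Family~(ii), $\overline{\G} = kK_1 \sqcup K_{b,\ldots,b}$, complements (via $\overline{tK_m} = K_{m,\ldots,m}$) to the multicone over a regular complete multipartite graph, giving (ii). Family~(iii), $\overline{\G} = K_1 \sqcup \Omega$ with $\Omega$ strongly regular, complements to $\G = K_1 \vee \overline{\Omega}$, which is the cone over the strongly regular graph $\overline{\Omega}$ (using the standard fact that the complement of an SRG is again an SRG), giving (iii).

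The substantive case is~(iv). If $\overline{\G} = H_1 \sqcup H_2$ with $H_i$ strongly regular, having distinct valencies $k_1 \neq k_2$ and shared non-trivial eigenvalues $\{r,s\}$, then $\G = \overline{H_1} \vee \overline{H_2}$. The non-trivial eigenvalues of $\overline{H_i}$ are $\{-1-r,-1-s\}$, which is the same set for both $i$, and the valencies of $\overline{H_i}$ are $|V(H_i)|-1-k_i$, which remain distinct---a check resting on the SRG parameter relations $r + s = \lambda_i - \mu_i$ and $rs = \mu_i - k_i$ together with $k_1 \neq k_2$. Thus $\G$ is the join of two SRGs with distinct valencies and shared non-trivial eigenvalues, matching (iv).

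The main obstacle I anticipate is the bookkeeping in case~(iv), in particular confirming that the complementary valencies remain distinct in all parameter regimes allowed by Proposition~\ref{disconnected}(iv), and to a lesser extent ensuring that the structural identification of the complement in case~(ii) with the paper's definition of multicone is made correctly.
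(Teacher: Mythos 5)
Your overall strategy --- pass to the complement, apply Proposition~\ref{disconnected} there using the observation from Section~\ref{sec3} that complementation preserves the main-plain index, and translate each family via $\overline{A\sqcup B}=\overline{A}\vee\overline{B}$ --- is exactly the argument the paper intends: the corollary is stated without any proof precisely because this complementation step is the expected derivation. Cases (i) and (iii) go through as you describe.

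However, the check you yourself flag as the ``main obstacle'' in case (iv) does not in fact succeed, and you assert that it does. For a strongly regular graph with valency $k$ and non-trivial eigenvalues $r>s$, the valency of the complement is $\overline{k}=n-1-k=\frac{-k(r+1)(s+1)}{k+rs}$, and for fixed $r,s$ this is injective in $k$ if and only if $rs(r+1)(s+1)\neq 0$, i.e.\ (since $s<-1$ here) if and only if $r\neq 0$. Proposition~\ref{disconnected}(iv) allows $\theta_1=0$: take $\overline{\G}=K_{2,2,2}\sqcup K_{2,2}$, a disjoint union of two strongly regular graphs with valencies $4\neq 2$ and common non-trivial eigenvalues $0$ and $-2$. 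Then $\G=\overline{K_{2,2,2}}\vee\overline{K_{2,2}}=3K_2\vee 2K_2$, a join of two strongly regular graphs that both have valency $1$. So ``the complementary valencies remain distinct'' is false in this regime; your case (iv) (and, strictly speaking, item (iv) of the corollary as literally stated) does not cover such graphs, and the argument needs either an extra subfamily (both factors disjoint unions of equal-sized cliques) or a weakening of ``different valencies''. A similar caution applies to your case (ii): the complement of $sK_1\sqcup K_{m\times a}$ is $K_s\vee(mK_a)$, whose apex is a clique rather than the independent set required by the paper's definition of a multicone, so for $s\ge 2$ and $m\ge 2$ the identification with a multicone over a regular complete multipartite graph is not immediate and also needs to be addressed.
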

\section{Examples and discussion}\label{sec5}
In this section, we propose some families of graphs with two main
and two plain eigenvalues.

\subsection{Strong graphs}\label{sec51}
The \emph{Seidel matrix} $S$ of a graph, with adjacency matrix $A$, is defined by $S=J-I-2A$.
A \emph{strong graph} is a graph such that its Seidel matrix $S$ satisfies $S^{2}\in\langle S,I,J\rangle$,
where $\langle \ldots\rangle$ denotes the $\mathbb{R}$-span. Seidel \cite{Sei68} showed that:
\begin{prp}\cite{Sei68}
Let $\G$ be a graph with Seidel matrix $S$. Then $\G$ is strong if and only if at least one of the following
holds:
\begin{itemize}
\item[\emph{(i)}] $G$ is strongly regular.
\item[\emph{(ii)}] $S$ has exactly two distinct eigenvalues.
\end{itemize}
\end{prp}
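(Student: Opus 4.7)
The plan is to prove the two directions separately. The ``if'' direction is relatively straightforward. If $S$ has exactly two distinct eigenvalues, as in case (ii), then (since $S$ is symmetric) $S$ satisfies its monic quadratic minimal polynomial, so $S^{2}$ is a real linear combination of $S$ and $I$ and in particular lies in $\langle S, I, J\rangle$. For case (i), starting from a strongly regular graph $\G$ with parameters $(n,k,a,c)$, I would substitute $A=\tfrac{1}{2}(J-I-S)$ into the defining relation $A^{2}=(a-c)A+(k-c)I+cJ$ and expand $S^{2}=(J-I-2A)^{2}$ using $JA=AJ=kJ$ together with $J^{2}=nJ$; this yields $S^{2}$ as an explicit linear combination of $S$, $I$, and $J$.

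For the converse, suppose $\G$ is strong, say $S^{2}=\alpha S+\beta I+\gamma J$ for some reals $\alpha,\beta,\gamma$. The natural split is on whether the coefficient $\gamma$ vanishes. If $\gamma=0$, then $S$ satisfies the quadratic $x^{2}-\alpha x-\beta$, so, $S$ being symmetric, its spectrum consists of at most two distinct values. The degenerate possibility of a single eigenvalue would force $S=\theta I$, which is impossible for $n\geq 2$ because $S$ has zero diagonal and $\pm 1$ off-diagonal entries. Hence $S$ has exactly two distinct eigenvalues and we land in case (ii).

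The more delicate case is $\gamma\neq 0$. Here one can solve for $J$ to write $J=\gamma^{-1}(S^{2}-\alpha S-\beta I)$, exhibiting $J$ as a polynomial in $S$. In particular $J$ and $S$ commute and are simultaneously diagonalizable, and since $\mathbf{j}$ is (up to scalar) the unique eigenvector of $J$ for its nonzero eigenvalue $n$, the vector $\mathbf{j}$ must also be an eigenvector of $S$. Combined with the identity $S\mathbf{j}=(n-1)\mathbf{j}-2\mathbf{d}$, where $\mathbf{d}=A\mathbf{j}$ is the degree vector, this forces $\mathbf{d}$ to be a scalar multiple of $\mathbf{j}$, so $\G$ is regular. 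Once regularity is in hand, $A$ commutes with $J$, so substituting $S=J-I-2A$ into the relation $S^{2}\in\langle S,I,J\rangle$ and simplifying with $J^{2}=nJ$ yields the equivalent relation $A^{2}\in\langle A,I,J\rangle$. Reading this entrywise says that any two distinct vertices have a constant number of common neighbors depending only on adjacency, which is the combinatorial definition of strongly regular, placing $\G$ in case (i). The step I expect to be the main obstacle is exactly the deduction of regularity when $\gamma\neq 0$; everything else is formal manipulation in the commutative algebra generated by $A$ and $J$.
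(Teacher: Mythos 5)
Your argument is correct. Note that the paper does not actually prove this proposition; it is quoted from Seidel's 1968 paper, so there is no in-paper proof to compare against. Your route is the standard one: the forward direction by expanding $S^2=(J-I-2A)^2$ with $AJ=JA=kJ$ (for the strongly regular case) and by the quadratic minimal polynomial (for the two-eigenvalue case); the converse by splitting on whether the coefficient of $J$ in $S^2=\alpha S+\beta I+\gamma J$ vanishes, using that when $\gamma\neq 0$ the matrix $J$ is a polynomial in $S$, hence $\mathbf{j}$ is an eigenvector of $S$, which forces regularity via $S\mathbf{j}=(n-1)\mathbf{j}-2\mathbf{d}$, after which $A^2\in\langle A,I,J\rangle$ gives strong regularity entrywise. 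All steps check out, including the exclusion of the one-eigenvalue degeneracy for $n\geq 2$.
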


Hayat et al. \cite{HKLQ16} showed the following result.
\begin{thm}\cite{HKLQ16}\label{thm1}
Let $\G$ be an $n$-vertex non-regular strong graph.
If $\G$ has Seidel eigenvalues $[-1-2\theta_{0}]^{m_{0}},[-1-2\theta_{1}]^{m_{1}}$,
then it has four distinct (adjacency) eigenvalues $\mu_{0},\mu_{1},[\theta_{0}]^{m_{0}-1},[\theta_{1}]^{m_{1}-1}$
of which $\mu_{0}$ and $\mu_{1}$ are main eigenvalues and $\theta_{0},\theta_{1}$ are plain eigenvalues. The main eigenvalues
$\mu_{0},\mu_{1}$ are uniquely determined by $\theta_{0},~\theta_{1},~m_{0},~m_{1},~n$ and the number of edges.
\end{thm}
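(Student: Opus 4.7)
The plan is to transfer the Seidel spectral decomposition to that of $A$ via $A=\tfrac{1}{2}(J-I-S)$. Write $\sigma_i=-1-2\theta_i$ and let $E_i$ denote the $\sigma_i$-eigenspace of $S$, so that $\mathbb{R}^n=E_0\oplus E_1$ orthogonally. Since $\Gamma$ is non-regular, $S\mathbf{j}=(n-1)\mathbf{j}-2\mathbf{d}$ is not a scalar multiple of $\mathbf{j}$, so $\mathbf{j}$ is not an $S$-eigenvector and its orthogonal projections $\mathbf{j}_i\in E_i$ are both nonzero.

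First I would pin down the plain part. For any $\mathbf{x}\in E_i$ with $\mathbf{x}^{\top}\mathbf{j}=0$ one has $J\mathbf{x}=0$ and $S\mathbf{x}=\sigma_i\mathbf{x}$, hence $A\mathbf{x}=-\tfrac{1+\sigma_i}{2}\mathbf{x}=\theta_i\mathbf{x}$; the same computation shows that conversely every plain $\theta_i$-eigenvector of $A$ lies in $E_i$. Because $\mathbf{j}_i\neq 0$, the condition $\mathbf{x}^{\top}\mathbf{j}=0$ cuts out a codimension-one subspace $V_i$ of $E_i$, so $\theta_i$ is a plain eigenvalue of $A$ of plain multiplicity exactly $m_i-1$.

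Next I would analyse the orthogonal complement $W=\langle\mathbf{j}_0,\mathbf{j}_1\rangle$ of $V_0\oplus V_1$. Using $J\mathbf{j}_i=\|\mathbf{j}_i\|^2\mathbf{j}$ a direct calculation gives $A\mathbf{j}_i=\bigl(\theta_i+\tfrac{1}{2}\|\mathbf{j}_i\|^2\bigr)\mathbf{j}_i+\tfrac{1}{2}\|\mathbf{j}_i\|^2\mathbf{j}_{1-i}$, so $W$ is $A$-invariant and, in the orthonormal basis $\{\mathbf{j}_i/\|\mathbf{j}_i\|\}_{i=0,1}$, the matrix of $A|_W$ is
\[
M'=\begin{pmatrix}\theta_0+\tfrac{1}{2}\|\mathbf{j}_0\|^2 & \tfrac{1}{2}\|\mathbf{j}_0\|\,\|\mathbf{j}_1\|\\ \tfrac{1}{2}\|\mathbf{j}_0\|\,\|\mathbf{j}_1\| & \theta_1+\tfrac{1}{2}\|\mathbf{j}_1\|^2\end{pmatrix}.
\]
The strictly positive off-diagonal entry forces two distinct eigenvalues $\mu_0,\mu_1$, and the identity $\det(M'-\theta_i I)=\tfrac{1}{2}\|\mathbf{j}_i\|^2(\theta_{1-i}-\theta_i)\neq 0$ shows neither $\mu_0$ nor $\mu_1$ equals a $\theta_j$. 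This yields four distinct adjacency eigenvalues; since the $\mu_i$-eigenspace of $A$ lies in $W$ but cannot be orthogonal to $\mathbf{j}$ (otherwise $\mu_i$ would be plain and hence equal to some $\theta_j$ by the first step), both $\mu_0$ and $\mu_1$ are main.

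Finally, for the uniqueness assertion, $\mu_0+\mu_1=\tr(M')=\theta_0+\theta_1+\tfrac{n}{2}$ and $\mu_0\mu_1=\det(M')=\theta_0\theta_1+\tfrac{1}{2}\bigl(\theta_0\|\mathbf{j}_1\|^2+\theta_1\|\mathbf{j}_0\|^2\bigr)$, so it suffices to express $\|\mathbf{j}_0\|^2$ and $\|\mathbf{j}_1\|^2$ in terms of the declared data. These are the unique solutions of the non-singular linear system
\[
\|\mathbf{j}_0\|^2+\|\mathbf{j}_1\|^2=n,\qquad \sigma_0\|\mathbf{j}_0\|^2+\sigma_1\|\mathbf{j}_1\|^2=\mathbf{j}^{\top}S\mathbf{j}=n(n-1)-4e,
\]
where $e$ is the number of edges and we use $\mathbf{j}^{\top}J\mathbf{j}=n^2$, $\mathbf{j}^{\top}A\mathbf{j}=2e$; non-singularity follows from $\sigma_0\neq\sigma_1$. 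The only real obstacle is bookkeeping: one has to verify that the $2\times 2$ main block does not accidentally reproduce $\theta_0$ or $\theta_1$, which in turn hinges on $\mathbf{j}$ having nonzero projection onto each Seidel eigenspace (a consequence of non-regularity) together with $\theta_0\neq\theta_1$.
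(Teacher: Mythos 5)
Your proof is correct. Note that the paper itself does not prove this theorem at all --- it is quoted from \cite{HKLQ16} with only a citation --- so there is no in-paper argument to compare against; what you have produced is a complete, self-contained derivation. Your route (decompose $\mathbb{R}^n$ into the two Seidel eigenspaces $E_0\oplus E_1$, observe that non-regularity forces both projections $\mathbf{j}_0,\mathbf{j}_1$ of $\mathbf{j}$ to be nonzero, identify the plain $\theta_i$-eigenspace of $A$ with $E_i\cap\mathbf{j}^{\perp}$, and diagonalize the $2\times 2$ block of $A$ on $\langle\mathbf{j}_0,\mathbf{j}_1\rangle$) is the natural one, and all the computations check out: the off-diagonal entry $\tfrac12\|\mathbf{j}_0\|\,\|\mathbf{j}_1\|$ of $M'$ is indeed positive, $\det(M'-\theta_iI)=\tfrac12\|\mathbf{j}_i\|^2(\theta_{1-i}-\theta_i)\neq0$ correctly rules out coincidences with the plain eigenvalues (which also shows $\mu_0,\mu_1$ cannot be plain, hence are main), and the trace/determinant identities together with $\|\mathbf{j}_0\|^2+\|\mathbf{j}_1\|^2=n$ and $\sigma_0\|\mathbf{j}_0\|^2+\sigma_1\|\mathbf{j}_1\|^2=n(n-1)-4e$ give the claimed uniqueness. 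The only cosmetic caveat is that the conclusion ``four distinct eigenvalues with $\theta_i$ of multiplicity $m_i-1$'' implicitly requires $m_0,m_1\geq 2$ (if some $m_i=1$ the eigenvalue $\theta_i$ disappears); this is a hypothesis hidden in the theorem statement itself rather than a gap in your argument, but it would be worth one sentence.
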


Thus, the non-regular strong graphs are the graphs with two main and two plain eigenvalues.

\subsection{Strongly biregular graphs}\label{sec54}
Let $\G$ be a connected strongly biregular graph. By a result of Van Dam \cite{V98}, $\G$ is equitable and
thus has two main eigenvalues of which one is the spectral radius.
Consequently, $\G$ has two main and at most two plain eigenvalues.
Note that a strongly biregular graph $\G$ has exactly two main and two plain eigenvalues if and only if
$\G$ is non-bipartite.\\

Rowlinson \cite{R16} characterized the strongly biregular graphs among the graphs with three distinct eigenvalues. For the convenience
of the reader, we include a proof of it.
\begin{lema}\label{rowlinson}
Let $\G$ be a connected graph with three distinct eigenvalues. Then two of them are main if and only if $\G$ is strongly biregular.
\end{lema}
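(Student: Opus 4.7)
The plan is to handle the two implications separately, with the forward direction being the substantive one.

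For the backward direction (strongly biregular $\Rightarrow$ two main eigenvalues), I would simply appeal to known results already cited in the excerpt. By Van Dam's theorem \cite{V98}, a connected strongly biregular graph is equitable; by Hayat et al. \cite{HKLQ16}, an equitable biregular graph has precisely two main eigenvalues. This is the easy half.

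For the forward direction, let $\theta_0 > \theta_1 > \theta_2$ be the three distinct eigenvalues. By Lemma \ref{lem-sp-rad}, $\theta_0$ is simple and main, so the other main eigenvalue, call it $\theta$, lies in $\{\theta_1,\theta_2\}$. The plan is to show that the valency sequence is a function of the Perron vector and then extract a quadratic equation that forces at most two valencies. Let $\mathbf{p}$ be the (positive) Perron eigenvector. Since the remaining eigenvalue is plain, $\mathbf{j}$ has no component in its eigenspace, so I can write $\mathbf{j}=c_0\mathbf{p}+\mathbf{u}$ with $\mathbf{u}\in\xi(\theta)$ and $c_0\neq 0$. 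Applying $A$ gives
\[
\mathbf{d}=A\mathbf{j}=\theta\,\mathbf{j}+c_0(\theta_0-\theta)\mathbf{p},
\]
so each valency $d_v$ is an affine (and non-constant, since $c_0(\theta_0-\theta)\neq 0$) function of $p_v$.

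The second ingredient is the spectral identity $(A-\theta_1 I)(A-\theta_2 I)=\alpha\,\mathbf{p}\mathbf{p}^{\top}$ for some $\alpha>0$, obtained from the spectral decomposition together with the fact that $\theta_0$ is simple. Expanding and reading off diagonal entries (using $(A^2)_{vv}=d_v$) yields
\[
p_v^2=\frac{d_v+\theta_1\theta_2}{\alpha}.
\]
Substituting $p_v=(d_v-\theta)/(c_0(\theta_0-\theta))$ into this relation gives a single quadratic equation in $d_v$ whose coefficients do not depend on $v$. Hence $d_v$ takes at most two distinct values over $V(\Gamma)$. Since $\Gamma$ has two main eigenvalues, it is not regular by Lemma \ref{1-main}, so exactly two valencies occur; combined with the hypothesis of three distinct eigenvalues, $\Gamma$ is strongly biregular.

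I do not expect any serious obstacle: the main technical step is simply combining the linear relation between $d_v$ and $p_v$ (coming from the two-main-eigenvalue structure) with the quadratic relation between $p_v^2$ and $d_v$ (coming from the three-eigenvalue spectral decomposition). The mild point to watch is justifying that $\mathbf{j}$ lies in the span of the two main eigenspaces, but this is immediate from the definition of "plain" applied to the unique non-main eigenvalue.
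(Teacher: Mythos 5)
Your proposal is correct and follows essentially the same route as the paper: both combine the rank-one identity $(A-\theta_1I)(A-\theta_2I)=\alpha\alpha^{\top}$ with an affine relation between the degree vector and the Perron vector to obtain a fixed quadratic forcing at most two valencies, and both dispose of the converse via equitability (Van Dam plus Proposition \ref{bireg-equitable}). The only cosmetic differences are that you derive the affine relation by decomposing $\mathbf{j}$ over the two main eigenspaces rather than via Hagos' relation $A\mathbf{d}=a\mathbf{d}+b\mathbf{j}$, and your quadratic is in $d_v$ rather than in the Perron entries, which conveniently guarantees a nonzero leading coefficient (note only that the unique remaining eigenvalue should be called \emph{non-main} rather than \emph{plain} when you argue $\mathbf{j}$ has no component in its eigenspace).
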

\begin{proof}
Let $\G$ have three distinct eigenvalues, say $\theta_0>\theta_1>\theta_2$. Then these exists a positive vector $\alpha$
such that
\begin{equation}\label{vandam-eqn}
(A-\theta_1I)(A-\theta_2I)=\mathbf{\alpha}\mathbf{\alpha}^{\top},\hspace{0.5cm}\textrm{and}\hspace{0.3cm}A\mathbf{\alpha}=\delta_{0}\mathbf{\alpha}
\end{equation}
both hold.
From this equation we obtain $d_{i}=\alpha_{i}^{2}-\theta_1\theta_2$, where $d_{i}$ is the valency of vertex $i$.
Let $\G$ have two main eigenvalues. Then by Eq. (\ref{hagos-eqn}) we obtain $A^{2}\mathbf{j}\in\langle\mathbf{d},\mathbf{j}\rangle$.
Now by Eq. (\ref{vandam-eqn}) $\mathbf{\alpha}(\mathbf{\alpha}^{\top}\mathbf{j})\in\langle\mathbf{d},\mathbf{j}\rangle$ and
$\mathbf{\alpha}^{\top}\mathbf{j}\neq0$. Accordingly there exist real numbers $a$ and $b$ such that
$\mathbf{\alpha}=a\mathbf{d}+b\mathbf{j}$. It follows that
\begin{equation*}
a\alpha_{i}^{2}-\alpha_{i}-a\theta_1\theta_2+b=0\hspace{0.5cm}(i=1,\ldots,n),
\end{equation*}
and hence that the $\alpha_i$'s take just two values. Consequently, by Eq. (\ref{vandam-eqn}), the graph $\G$ has exactly two valencies.
The other direction immediately follows from Prop. \ref{bireg-equitable}.
\end{proof}

Bridges \& Mena \cite{BM81} and De Caen et al. \cite{DVS99} give examples of connected graphs with three eigenvalues
and three distinct valencies. They have three main eigenvalues by Lemma \ref{rowlinson}. Thus the upper bound $r+s=l+d$ in Lemma
\ref{tmain_tplain} can be achieved.

\subsection{Vertex-deleted subgraph of a strongly regular graph}\label{sec54}

Let $\G$ be a connected strongly regular graph with parameters $(n,k,a,c)$ and distinct eigenvalues $k>\theta_{1}>\theta_{2}$.
Fix a vertex $u$ of $\G$.
Let $N_{u}$ be the set of neighbors of vertex $u$. Then the partition $\sigma=\{u,N_{u},V-(\{u\}\cup N_{u})\}$ is equitable with quotient matrix
$Q=\left(\begin{array}{ccc}
0 & k & 0\\
1 & a & k-a-1 \\
0 & c & k-c \\
\end{array}
\right).$
As $Q$ has constant row sum, the all-ones vector $\mathbf{j}$ is an eigenvector of $Q$. Let $\mathbf{x}_1$ and $\mathbf{x}_2$
be the other two eigenvectors of $Q$. Let $P$ be the characteristic matrix of $\sigma$. Then $\mathbf{w}_1=P\mathbf{x}_1$
and $\mathbf{w}_2=P\mathbf{x}_2$ are two eigenvectors of $\G$ with eigenvalues $\theta_1$ and $\theta_2$, respectively.
The graph $\G-u$ is equitable biregular and thus has two main eigenvalues.
Note that the plain eigenvectors $\mathbf{w}_{1}$ and $\mathbf{w}_{2}$ of $\G$ both give
eigenvectors of $\G-u$ by deleting $(\mathbf{w}_{1})_{u}$ and $(\mathbf{w}_{2})_{u}$.
Therefore, $\G-u$ also has two plain eigenvalues.

\subsection{An exceptional example}
In this short subsection, by switching, we mean the Seidel switching of graphs.\\

Let $\G$ be a strongly regular graph.
Let $U\subseteq V(\G)$ be such that the partition $\sigma=\{U,V-U\}$
is equitable. We switch the graph $\G$ with respect to $U$. Note that
the graph $\G^{U}$ is biregular and equitable. Thus, by Prop. \ref{bireg-equitable},
the graph $\G^{U}$ has two main eigenvalues.\\

We now give an example where $\G^{U}$ has two main and two plain eigenvalues.\\

\noindent\textbf{Example:}
If we switch $L(K_{6,6})$, which is a strongly regular graph with parameters $(36,10,4,2)$,
with respect to a Delsarte clique which is a $6$-clique. Then we obtain an equitable biregular graph with
spectrum $\{[7+\sqrt{129}]^{1}, [4]^{9}, [-2]^{25}, [7-\sqrt{129}]^{1}\}$ of which $4$ and $-2$
are the two plain eigenvalues and $7\pm\sqrt{129}$ are the two main eigenvalues.\\

\noindent We wonder whether there are more of these examples.


\end{document}